\renewcommand{\emptyset}{\varnothing}
\newcommand{\E}{\mathbf{E}}
\renewcommand{\P}{\mathbf{P}}
\newcommand{\se}{\subseteq}
\renewcommand{\emptyset}{\varnothing}
\newcommand{\Z}{\mathbb Z}
\newcommand{\inv}{^{-1}}
\newcommand{\f}{\frac}
\newcommand{\ind}[1]{\mathbf{1}{\{#1\}} }
\definecolor{dark_green}{RGB}{1, 180, 1}
\theoremstyle{plain}
\newtheorem{thm}{Theorem}
\newtheorem{lemma}[thm]{Lemma}
\newtheorem{prop}[thm]{Proposition}
\newtheorem{cor}[thm]{Corollary}
\newtheorem{conjecture}[thm]{Conjecture}
\theoremstyle{remark}
\newtheorem{remark}[thm]{Remark}
\begin{document}

\title{Site recurrence for coalescing random walk}

\author[I. Benjamini]{Itai Benjamini}
\address{Department of Mathematics, University of Washington}
\email{itai.benjamini@weizmann.ac.il}

\author[E. Foxall]{Eric Foxall}
\address{Department of Mathematical and Statistical Sciences, Arizona State University}
\email{Eric.Foxall@asu.edu}

\author[O. Gurel-Gurevich]{Ori Gurel-Gurevich}
\address{Einstein Institute of Mathematics, Hebrew University of Isreal}
\email{origurel@math.huji.ac.il}

\author[M. Junge]{Matthew Junge}
\address{Department of Mathematics, University of Washington}
\email{jungem@math.washington.edu}

\author[H. Kesten]{Harry Kesten}
\address{Department of Mathematics, Cornell University}
\email{kesten@math.cornell.edu}

\maketitle

\begin{abstract}
Begin continuous time random walks from every vertex of a graph and have particles coalesce when they collide. We use a duality relation with the voter model to prove the process is site recurrent on bounded degree graphs, and for Galton-Watson trees whose offspring distribution has exponential tail. We prove bounds on the occupation probability of a site, as well as a general 0-1 law. Similar conclusions hold for a coalescing process on trees where particles do not backtrack.

\end{abstract}

\section{Introduction}

\emph{Coalescing random walk} (CRW) 
starts with one particle at each vertex of an undirected graph. Each then performs a continuous time nearest neighbor random walk, jumping according to a mean 1 exponential clock. 
When particles collide they bind together and proceed as one. 
Say that CRW is \emph{site recurrent} if every site is almost surely visited infinitely often. If instead this occurs with probability 0, call the process \emph{transient}. Our main tool for proving site recurrence is the following necessary and sufficient condition in terms of the expected occupation time of a vertex.

\begin{prop}\thlabel{thm:main}
Site recurrence 
 is equivalent to infinite expected occupation time at any vertex.
Moreover, CRW is either site recurrent or transient (i.e.\ it satisfies a 0-1 law). 
\end{prop}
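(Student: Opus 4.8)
The plan is to fix a vertex $v$, let $\eta_t\in\{0,1\}^V$ denote the occupation configuration of CRW started from the full configuration $\mathbf 1$ (a particle at every vertex), and set $u(t)=\P(\eta_t(v)=1)$ and $O_v=\int_0^\infty \ind{\eta_t(v)=1}\,\d t$. By Tonelli, $\E[O_v]=\int_0^\infty u(t)\,\d t$, so ``infinite expected occupation time at $v$'' means $\int_0^\infty u(t)\,\d t=\infty$. Writing $A_v=\{\eta_t(v)=1 \text{ for arbitrarily large } t\}$, the first observation I would record is that, in the Poisson-clock construction, each maximal interval during which $v$ is occupied has an $\mathrm{Exp}(1)$ length (the occupying particle leaves at the first ring of $v$'s outgoing clock, and incoming coalescences do not reset this clock). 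Hence on a bounded-degree graph there is no explosion, every occupation interval is a.s.\ finite, and $A_v=\{O_v=\infty\}$ up to a null set. Thus ``site recurrence at $v$'' is exactly $\P(A_v)=1$, and the Proposition reduces to the two claims (II) $\P(A_v)\in\{0,1\}$ and (I) $\P(A_v)=1\iff\int_0^\infty u\,\d t=\infty$; I would then note that connectedness of the graph propagates the value of $\P(A_v)$ between neighbours, so the per-vertex statement yields the global dichotomy between site recurrence and transience.

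The backbone of everything is a monotone coupling. Using the graphical representation with independent Poisson clocks on the directed edges (when the clock of $(x,y)$ rings, a particle at $x$ moves to $y$, coalescing if $y$ is occupied), I would verify by checking the single-ring update that $\eta_0\le\eta_0'$ pointwise forces $\eta_t\le\eta_t'$ for all $t$ under the common clocks. Three consequences follow. First, $\eta_{s+t}\preceq\eta_t$ stochastically (run the dominated configuration $\eta_s\le\mathbf 1$ for further time $t$), so $u$ is non-increasing with a limit $u(\infty)\ge0$. Second, any increasing event is more likely from a larger start; in particular $g(\zeta):=\P_\zeta(A_v)$ satisfies $g(\zeta)\le g(\mathbf 1)=:p$ for every $\zeta$. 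Third, and crucially for the hard direction, the Markov property together with $\eta_s\le\mathbf 1$ gives the pair-correlation bound $\P(\eta_s(v)=1,\eta_t(v)=1)=\E\!\big[\ind{\eta_s(v)=1}\,\P_{\eta_s}(\eta_{t-s}(v)=1)\big]\le u(s)\,u(t-s)$ for $s\le t$.

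For the $0$--$1$ law I would exploit that $A_v$ is invariant under time-shift, so by the Markov property $g(\eta_t)=\P(A_v\mid\mathcal F_t)$ is a bounded martingale converging a.s.\ to $\ind{A_v}$. On $A_v$ the limit is $1$, yet $g(\eta_t)\le g(\mathbf 1)=p$ for all $t$ by monotonicity; hence if $\P(A_v)>0$ then $1\le p$, forcing $p=1$. Therefore $p\in\{0,1\}$, which is (II). Claim (I) is then immediate in one direction: if $p=1$ then $A_v=\{O_v=\infty\}$ holds a.s., so $\E[O_v]=\infty$.

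The remaining and only delicate implication is that $\int_0^\infty u\,\d t=\infty$ forces $p=1$; the difficulty is that this must cover the slow-decay regime where $u(t)\downarrow0$ yet the integral diverges (as on $\Z^2$), so a bare monotonicity or renewal bound is not enough. Here I would run a second-moment argument on $O_v(T)=\int_0^T\ind{\eta_t(v)=1}\,\d t$. The pair-correlation bound yields $\E[O_v(T)^2]\le 2\int_0^T\!\!\int_0^t u(s)u(t-s)\,\d s\,\d t\le 2\big(\int_0^T u\big)^2=2\,\E[O_v(T)]^2$, so Paley--Zygmund gives $\P\!\big(O_v(T)\ge\tfrac12\E[O_v(T)]\big)\ge\tfrac18$. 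Since $\E[O_v(T)]=\int_0^T u\to\infty$, letting $T\to\infty$ shows $\P(O_v=\infty)\ge\tfrac18>0$, i.e.\ $\P(A_v)>0$; the $0$--$1$ law then upgrades this to $\P(A_v)=1$, completing (I). I expect the main obstacle to lie not in this probabilistic core but in making the infinite-volume graphical construction and the monotone coupling fully rigorous (non-explosion and the single-ring monotonicity check), since everything above rests on the domination $\eta_s\le\mathbf 1$ and the resulting pair-correlation inequality.
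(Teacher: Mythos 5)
Your proposal is correct, and for the substantive implication (divergence of $\int_0^\infty p_t(v)\,dt$ implies recurrence) it takes a genuinely different route from the paper. The paper conditions on $\sigma_t$, the first occupation time of $v$ after time $t$, and uses attractiveness of the basic coupling to bound the conditional expected occupation of $(t,u]$ given $\sigma_t=r$ by $I(0,u)\le t+I(t,u)$; this yields the quantitative inequality $\P(\sigma_t\le u)\ge I(t,u)/(t+I(t,u))$, which proves recurrence directly as $u\to\infty$ and is then recorded as the tail bound of \thref{cor:ub}. You instead use the same monotone coupling to derive the pair-correlation inequality $\P(\eta_s(v)=1,\eta_t(v)=1)\le u(s)u(t-s)$ and run a second-moment/Paley--Zygmund argument on $O_v(T)$ to get $\P(O_v=\infty)\ge 1/8$, after which a separate $0$--$1$ law upgrades this to probability one. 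Both arguments rest on exactly the same domination $\eta_s\le\mathbf 1$; yours is the more standard second-moment template, and your $0$--$1$ law --- $\P(A_v\mid\F_t)=g(\eta_t)\le g(\mathbf 1)$ together with L\'evy's theorem forcing $g(\mathbf 1)\in\{0,1\}$ --- is cleaner and more explicit than the paper's terse ``positive probability implies probability one'' remark, which is really the same first-moment computation re-read. What you lose relative to the paper is the explicit estimate $\P(\sigma_t>u)\le t/(t+\int_t^u p_s(v)\,ds)$, which does not follow from a Paley--Zygmund constant of $1/8$ and which the paper needs for \thref{cor:ub}. Two small corrections: under the paper's convention (unit-rate clocks on each directed edge) a maximal occupation interval of $v$ is $\mathrm{Exp}(\deg v)$, not $\mathrm{Exp}(1)$; and the identification of ``visited at arbitrarily large times'' with ``infinite occupation time'' needs only local finiteness (i.i.d.\ positive holding times via the strong Markov property at the start of each occupation interval), not bounded degree, so you should not restrict that step to bounded-degree graphs since the proposition is stated for general CRW.
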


Let $p_t(v)$ be the probability a particle is at the vertex $v$ at time $t$, so that site recurrence is equivalent to divergence of $\int p_t(v)$. 
We use duality with the voter model to obtain non-integrable lower bounds on the following graphs:
%
\begin{thm}\thlabel{thm:bgw} CRW is site recurrent on:
\begin{enumerate}[label = (\roman*)]
    \item \label{bounded} Bounded degree graphs. If the maximum degree is $D$, then for all vertices $v$  $$p_t(v) \geq (1+ Dt)^{-1}$$
     for all $t \geq 0.$
    \item \label{gw} Galton-Watson trees with offspring distribution on $\mathbb Z^+$ and the probability of $k$ offspring bounded by $e^{-ck}$ for some $c>0$ and large enough $k$.
	 Here $$p_t(v) \geq C (t \log t)^{-1}$$ for all vertices $v$, large enough $t$, and some $C>0$ that depends on $c$.
\end{enumerate}
\end{thm}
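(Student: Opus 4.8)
The two estimates come from the same engine: a differential inequality for the occupation probability, fed by a negative-correlation estimate where the voter-model duality enters. The plan is to work in the graphical representation in which each vertex carries a rate-$1$ Poisson clock and, when it rings, the particle there (if any) jumps to a uniformly chosen neighbor. Writing $p_t(v)=\P(v\text{ occupied})$ and decomposing $\tfrac{d}{dt}p_t(v)$ into the rate at which the particle at $v$ departs and the rate at which a neighbor sends one in, one obtains
\[
\frac{d}{dt}p_t(v)=\sum_{x\sim v}\frac{1}{\deg x}\,\P(x\text{ occupied},\,v\text{ empty})-p_t(v).
\]
On a $D$-regular (or vertex-transitive) graph $p_t(v)\equiv\rho_t$ and this collapses, after substituting $\P(x\text{ occ},v\text{ emp})=\rho_t-\P(x\text{ occ},v\text{ occ})$, to $\dot\rho_t=-\P(\text{a fixed adjacent pair is both occupied})$.

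For part \ref{bounded} the plan is to prove the negative-correlation bound $\P(u\text{ occ},v\text{ occ})\le p_t(u)p_t(v)$, so that on a transitive graph $\dot\rho_t\ge-\rho_t^2\ge -D\rho_t^2$; since $\rho_0=1$, solving $\dot y=-Dy^2$ gives exactly $\rho_t\ge(1+Dt)^{-1}$. For a general bounded-degree graph I would run the same computation, using $\tfrac1{\deg x}\le 1$ and $\deg v\le D$ to absorb the spatial inhomogeneity of the inflow term into the single factor $D$, again reducing matters to controlling the pair occupation probabilities $\P(x\text{ occ},v\text{ occ})$. The negative-correlation estimate is where I expect the main obstacle to lie, and where duality is essential: joint occupation of several sites is \emph{not} itself a single voter-model genealogy question (the genealogy of one site always survives), so one must argue directly that coalescing walks are negatively associated, relating two-point occupation to the coalescence of two dual walks in the graphical representation.

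For part \ref{gw} the idea is to apply the same differential inequality on the quenched Galton–Watson tree, the only change being that the unbounded degrees force us to replace $D$ by an \emph{effective} degree. Over a time interval $[0,t]$ the particle tracked at $v$ makes $O(t)$ jumps and hence meets $O(t)$ vertices, and the coalescence driving the decay of $p_t(v)$ is governed by the degrees of these vertices rather than by the whole ball around $v$. With offspring tail $\P(\deg=k)\le e^{-ck}$, the maximum degree among $O(t)$ vertices is $O(c^{-1}\log t)$ with high probability, so the effective degree entering the integrated inequality is $O(\log t)$, yielding $p_t(v)\gtrsim(1+Ct\log t)^{-1}\asymp(t\log t)^{-1}$. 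The obstacle here is to couple the degree factor appearing in the differential inequality to the degrees actually seen along the trajectory and to convert the exponential tail into the $\log t$ bound uniformly over the random environment.

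Finally, both lower bounds are non-integrable, so $\int_0^\infty p_t(v)\,dt=\infty$; that is, the expected occupation time at $v$ is infinite, and site recurrence on both families follows from \thref{thm:main}.
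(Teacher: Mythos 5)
Your route is genuinely different from the paper's --- you work forward in time with a differential inequality for $p_t(v)$ plus a negative-association bound, whereas the paper works entirely in the dual voter model, where $|\zeta_t^v|$ is a skip-free martingale whose jump rate is at most $D|\zeta_t^v|$ (\thref{lem:rw}, \thref{cor:pt}); part (i) then follows by coupling with a binary branching process and solving $\theta'=-D\theta^2$, and part (ii) by revealing the tree on the fly, bounding the maximum exposed degree by $(3/c)\log k$ after $k$ exposures, controlling the cluster size with Doob's inequality, and combining an Erlang/Chebyshev bound on the elapsed time with the first-passage estimate $\P(N>n^2)\gtrsim 1/n$ for the embedded simple walk. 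Unfortunately your version has real gaps beyond a difference of taste. First, the negative-correlation inequality $\P(u\text{ occ},v\text{ occ})\le p_t(u)p_t(v)$ is the load-bearing step and you only flag it as ``the main obstacle''; it is a genuine theorem (Arratia proves it, essentially via the same duality you are trying to avoid), so as written the argument is circular in spirit and incomplete in fact. Second, even granting negative correlation, your ODE only closes on transitive graphs: in the paper's convention the exact identity is
\begin{equation*}
\frac{d}{dt}p_t(v)=\sum_{x\sim v}\bigl(p_t(x)-p_t(v)\bigr)-\sum_{x\sim v}\P(x\text{ occ},v\text{ occ}),
\end{equation*}
and the discrete-Laplacian term has no sign on a general bounded-degree graph, so it cannot be ``absorbed into the single factor $D$''; you would need $p_t$ to be subharmonic, which is not available. (You also use a rate-$1$-per-vertex clock rather than the paper's rate-$1$-per-directed-edge convention, which changes the constant: on a $D$-regular graph your computation yields $(1+t)^{-1}$, not $(1+Dt)^{-1}$.)

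For part (ii) the sketch is heuristic at a level that does not survive scrutiny. The claim that ``the particle tracked at $v$ makes $O(t)$ jumps and hence meets $O(t)$ vertices'' is not connected to the differential inequality, which involves all neighbors of $v$ and two-point functions rather than a single trajectory; and on a tree with unbounded degrees the jump rate of a particle is itself unbounded, so the $O(t)$ count presupposes exactly the degree control you are trying to establish. You would also need the Laplacian issue resolved on a highly non-transitive random tree. The paper's dual-side argument avoids all of this because the martingale property of the cluster size holds on any graph and only the jump rate feels the geometry; I would recommend reworking your proof around \thref{cor:pt} rather than trying to patch the forward equation.
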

Note that there are unbounded degree graphs for which CRW is not site recurrent; even the non-coalescing system of independent random walks is transient on trees with rapidly increasing degree. We are not sure how much the exponential tail hypothesis in (ii) can be weakened. See Further Questions (a) for more discussion.
A corollary to \thref{thm:main} is a general upper bound on the probability that a vertex $v$ is unoccupied on the interval $(t,u)$.
 
 \begin{cor} \thlabel{cor:ub}
Let $\sigma_t$ be the first time after $t$ that $v$ is occupied by a particle. It holds that
$$\P(\sigma_t > u) \leq \f{ t }{ t + \int_t^u p_s(v) ds}.$$
And, for a graph with maximum degree $D$ 
$$\P(\sigma_t >u ) \leq \f{ t }{ t + \f 1 D (\log(1 + Du) - \log( 1 + Dt)) } = O(1/\log(u)).$$
 \end{cor}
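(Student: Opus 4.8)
The plan is to prove the first (general) inequality by decomposing at the first reoccupation time $\sigma_t$ via the strong Markov property, and then to obtain the bounded-degree bound by inserting the pointwise estimate of \thref{thm:bgw}\ref{bounded}. Write $X_s=\ind{v\text{ occupied at time }s}$, so that $\E X_s=p_s(v)$; set $I=\int_t^u p_s(v)\,ds$ and $m(r)=\int_0^r p_s(v)\,ds$, so that $m(u)=m(t)+I$. Since $\{\sigma_t>u\}$ is precisely the event that $v$ is vacant on all of $(t,u)$, we have $\int_t^u X_s\,ds=0$ there, whereas on $\{\sigma_t\le u\}$ vacancy on $(t,\sigma_t)$ gives $\int_t^u X_s\,ds=\int_{\sigma_t}^u X_s\,ds$. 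Taking expectations yields the identity $I=\E\big[\int_{\sigma_t}^u X_s\,ds\,;\,\sigma_t\le u\big]$.

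Two inputs then remain. The elementary one is $m(u-t)\le t+I$: since $p_s(v)\le 1$ gives $t\ge m(t)$, we get $t+I\ge m(t)+I=m(u)$, whence $t+I-m(u-t)\ge m(u)-m(u-t)=\int_{u-t}^u p_s(v)\,ds\ge 0$. The substantive input is the strong Markov property at $\sigma_t$ combined with the monotonicity of CRW. On $\{\sigma_t\le u\}$, conditioned on $\F_{\sigma_t}$ the process continues as CRW started from the current occupied set $\xi_{\sigma_t}$, so $\E\big[\int_{\sigma_t}^u X_s\,ds\mid\F_{\sigma_t}\big]=\int_0^{u-\sigma_t}\P_{\xi_{\sigma_t}}(v\text{ occupied at }a)\,da$. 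Coupling two copies of CRW through a single graphical representation shows that the time-$a$ occupied set is nondecreasing in the initial set---a walker present in the smaller start is present in the larger one and follows the same trajectory---so, comparing $\xi_{\sigma_t}$ with the full configuration, $\P_{\xi_{\sigma_t}}(v\text{ occupied at }a)\le p_a(v)$. Hence $\E\big[\int_{\sigma_t}^u X_s\,ds\mid\F_{\sigma_t}\big]\le m(u-\sigma_t)\le m(u-t)$ on $\{\sigma_t\le u\}$, using that $m$ is nondecreasing and $\sigma_t\ge t$.

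Feeding this bound into the identity above and then using $m(u-t)\le t+I$ gives $I\le m(u-t)\,\P(\sigma_t\le u)\le (t+I)\big(1-\P(\sigma_t>u)\big)$, and solving for $\P(\sigma_t>u)$ produces the first bound $\P(\sigma_t>u)\le \f{t}{t+I}$. For the bounded-degree statement, \thref{thm:bgw}\ref{bounded} gives $p_s(v)\ge(1+Ds)\inv$, so $I\ge\int_t^u (1+Ds)\inv\,ds=\tf1D\big(\log(1+Du)-\log(1+Dt)\big)$; since $x\mapsto t/(t+x)$ is decreasing, substituting this lower bound into the first inequality yields the second displayed bound, whose denominator grows like $\tf1D\log u$ and is therefore $O(1/\log u)$.

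I expect the only real obstacle to be making the monotonicity step rigorous: one must fix the graphical (stirring) construction of CRW, verify that the time-$a$ occupied set is a monotone function of the initial configuration under the induced coupling, and confirm that the strong Markov restart applies at the stopping time $\sigma_t$ on the configuration space. Granting attractiveness and the resulting comparison $\P_{\xi_{\sigma_t}}(v\text{ occupied at }a)\le p_a(v)$, the remaining steps are the routine decomposition and the elementary inequality above.
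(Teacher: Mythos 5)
Your proof is correct and follows essentially the same route as the paper: the paper obtains the first inequality directly from the bound \eqref{eq:lb} already derived in the proof of \thref{thm:main} (strong Markov property at $\sigma_t$ plus attractiveness of the coupling, exactly the decomposition you carry out), and then gets the second display by substituting the lower bound from \thref{thm:bgw}~(i) and integrating. The only difference is that you re-derive \eqref{eq:lb} from scratch rather than citing it, and you do so carefully and correctly.
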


We also give a universal upper bound for $p_t(v)$ on general graphs. It follows that the occupation probability decays to 0 for any graph. The upper bound is a small modification of an argument in \cite{griffeath}, so we also credit David Griffeath.

\begin{prop}[Griffeath]
\thlabel{prop:pt}
Let $G$ be a connected, infinite graph. For all vertices $v$, any $\epsilon>0$, and large enough $t$ (depending on $\epsilon$) it holds that $p_t(v) \leq (1+\epsilon) / (2\sqrt{ \pi t})$. 
\end{prop}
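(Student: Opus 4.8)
The plan is to realize the whole system from a Harris graphical representation and then extract the constant $\tfrac{1}{2\sqrt{\pi t}}$ from a one-dimensional (local CLT) computation, following \cite{griffeath}. In this representation the trajectory $W^Y$ started from each vertex $Y$ is marginally an ordinary rate-$1$ random walk, and two trajectories coalesce once they meet, so the set of occupied sites at time $t$ is exactly $\{W^Y_t : Y \in V\}$. For a fixed $v$ introduce the \emph{clan}
$$ A_t = \{\, Y \in V : W^Y_t = v \,\}, $$
the set of initial vertices whose trajectory sits at $v$ at time $t$; then $v$ is occupied precisely when $A_t \neq \emptyset$, and $|A_t|$ is the number of original particles that have merged into the particle at $v$. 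Writing $k_t(Y,v) \defeq \P(W^Y_t = v)$ for the single-walk kernel, the first moment is $\E|A_t| = \sum_Y k_t(Y,v)$. Since $\{|A_t|\ge 1\} = \{v\ \text{occupied}\}$ and $|A_t| = 0$ off this event, one has the exact identity
$$ p_t(v) = \frac{\E|A_t|}{\,\E\big[\,|A_t| \mid v\ \text{occupied}\,\big]\,}. $$
Thus an upper bound on $p_t(v)$ is equivalent to a lower bound on the conditional mean clan size. (For a transitive graph $\E|A_t| = 1$; in general this first moment is controlled by the return structure of the single walk.)

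The content is the lower bound $\E[\,|A_t|\mid v\ \text{occupied}\,] \gtrsim 2\sqrt{\pi t}$: given that $v$ is occupied, on average $\gtrsim \sqrt t$ of the original walks must already have coalesced into it. I would attack this by size-biasing, fixing one trajectory $W^Y$ conditioned to be at $v$ at time $t$ and estimating $\sum_{Y'} \P(W^{Y'}_t = v \mid W^Y_t = v)$. The key observation is that two trajectories that are both at $v$ at time $t$ must already have coalesced, so each summand is a \emph{coalescence-and-arrival} probability for a pair of walks, and this is exactly where the constant should appear: the probability that two independent rate-$1$ walks on $\Z$, both started at the origin, occupy a common site at time $t$ is $\sum_w k_t(0,w)^2 = k_{2t}(0,0) \sim \tfrac{1}{2\sqrt{\pi t}}$, the value at $0$ of a centered Gaussian of variance $2t$. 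This one-dimensional heat-kernel value is what must emerge as the sharp constant, which also explains the form of the answer.

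The main obstacle is \emph{universality}: pairwise meeting behaviour varies wildly across graphs, so one cannot simply insert a single return probability, and indeed on many infinite graphs the return probability $k_{2t}(o,o)$ exceeds $\tfrac{1}{2\sqrt{\pi t}}$, so the extra smallness has to come from the coalescence/arrival structure rather than from the kernel alone. Following \cite{griffeath}, I expect the resolution to be a comparison that collapses the pair estimate to an effectively one-dimensional renewal computation, tracking only the relative motion of the tagged pair up to coalescence and using that $G$ is infinite (so the relative walk escapes to infinity rather than being trapped) to guarantee that $\tfrac{1}{2\sqrt{\pi t}}$ is never beaten. The factor $(1+\epsilon)$ and the phrase ``large enough $t$'' then enter only through replacing the exact local-CLT asymptotics by their Gaussian limit, which is why the conclusion is asymptotic rather than a clean inequality valid for all $t$. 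I expect the genuinely hard step to be making this one-dimensional reduction fully graph-independent; by contrast the clan and pairing identities above are routine bookkeeping.
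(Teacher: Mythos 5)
Your clan decomposition is essentially the paper's duality in disguise: under the time-reversed graphical representation the set $A_t=\{Y\colon W^Y_t=v\}$ is exactly the dual voter-model cluster $\zeta_t^v$ of \thref{lem:rw}, and since $|\zeta_t^v|$ is a martingale started at $1$ you in fact have $\E|A_t|=1$ on \emph{every} graph (not only transitive ones), so the proposition reduces, as you say, to the lower bound $\E\bigl[\,|A_t| \bigm| |A_t|\ge 1\,\bigr]\ge (1-o(1))\,2\sqrt{\pi t}$. The gap is in the route you propose for that bound. The size-biasing quantity $\sum_{Y'}\P(W^{Y'}_t=v\mid W^Y_t=v)$, averaged over $Y$ with weights $k_t(Y,v)/\E|A_t|$, equals $\E[|A_t|^2]/\E[|A_t|]$, and by Cauchy--Schwarz this is an \emph{upper} bound for $\E[|A_t|\mid |A_t|\ge1]$, not a lower bound; pair (second-moment) estimates of this kind yield Paley--Zygmund \emph{lower} bounds on $\P(|A_t|\ge1)=p_t(v)$, which is the opposite of what is wanted here. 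So even granting the graph-independent ``one-dimensional reduction'' that you explicitly leave open, the inequality would point the wrong way, and the step where the content lies is not supplied.

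The paper avoids pairs entirely. By \thref{lem:rw} and \thref{cor:pt}, $|A_t|=|\zeta_t^v|$ is not merely a martingale but a skip-free \emph{symmetric nearest-neighbour walk} absorbed at $0$, jumping to $|\zeta_t^v|\pm1$ each at rate equal to the number of edges leaving the cluster; this is at least one whenever the cluster is nonempty ($G$ being infinite and connected), so the total jump rate is at least $2$. Speeding up a symmetric walk only makes it reach $0$ sooner, so $p_t(v)=\P(|\zeta_t^v|>0)\le\P(X_s>0\ \forall s\le t)$ with $X$ a rate-$2$ continuous-time simple random walk started at $1$, and the reflection principle together with the local CLT give the stated asymptotics. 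Note that this derivation also produces your missing conditional-mean bound as a \emph{consequence}, via $1=\E|A_t|=\P(|A_t|>0)\,\E[|A_t|\mid|A_t|>0]$, rather than as an input: the whole clan size, not the pairwise meeting structure, is the one-dimensional walk whose survival probability carries the constant.
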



%
\subsection*{History}
The study of coalescing systems began in the 1970's with the paper of Erd\H{o}s and Ney \cite{erdos1974}. The duality relationship to the voter model, which we rely heavily upon, was first observed in \cite{holley1}. Variations of coalescing random walk 
continue to find new applications. For example, random coalescence involving multiple types of particles, and particle interaction rules, is used to model certain chemical reactions (see \cite{holley}, \cite{kesten_types} and \cite{kesten_number}). Non-spatial models such as Kingman coalescence (\cite{kingman}) find applications in modeling ancestry in biology. 
A survey of coalescence models can be found in \cite{overview}.
Arratia \cite{arratia} looks at site recurrence for discrete time walks, and annihilating systems, both with possibly vacant sites in the starting configuration.
CRW is applied to the voter model in \cite{balaz_voter}. Also, it is studied in more generality in \cite{double_annihilating} and \cite{Z_coalescing}.
Other recent articles have focussed on different settings.
Its behavior on finite graphs is of interest to computer scientists. The model on the $d$-dimensional torus is introduced in \cite{cox}. There, they study the expected time for the process to coalesce into a single particle. In \cite{cs} the coalescence time is studied on a variety of finite graphs. Elsewhere, in a continuous spatial setting, recurrence is studied with coalescing diffusions by Cabezas, Rolla and Sidoravicious in \cite{2013arXiv1309.4387C}.

Early results for coalescing random walk focused on the lattices $\Z^d$. In \cite{griffeath} Griffeath shows that both coalescing and annihilating random walk on $\Z^d$ is a.s.\ \emph{weakly recurrent}, under certain restrictions on the vacant sites in the starting configuration. Weakly recurrent means that each site is occupied infinitely often, but for a decreasing fraction of time. An important ingredient in the proof of recurrence is an estimate for the function $p_t$, the probability a particle occupies the origin at time $t$. In \cite{first}, Bramson and Griffeath study $p_t$ in the coalescing case and compute its asymptotics for every $d\geq 1$. Rather nicely, for $d \geq 3$ it holds that $p_t \sim (\gamma_dt)\inv$ with $\gamma_d$ the probability a random walk on $\mathbb Z^d$ never returns to its starting point. The proof of this is computational; later, \cite{kesten} Kesten gives a probabilistic argument that revolves around the heuristic $p_t' \approx -\gamma_d p_t^2$.

\subsection*{Overview}
The main idea 
 is to obtain information about $p_t(v)$ from a dual voter model. This dual was first applied to CRW in \cite{holley1}, and subsequently utilized in \cite{hs, griffeath, arratia2, arratia}. 
 In \thref{cor:pt} we deduce that $p_t(v)$ is equivalent to the probability a time-changed nearest neighbor simple random walk avoids 0 up to time $t$. All of our estimates come from studying this random walk. 

\subsection*{Further Questions} \label{questions} 
We record several questions regarding coalescing and annihilating random walk here:
\begin{enumerate}[label = (\alph*)]

\item \label{q:GW} Can the assumptions on the degree 
 in \thref{thm:bgw} (ii) be weakened? We expect that our approach extends (at least) to stationary graphs with finite expected degree.



	\item {Suppose $G$ is an  infinite  unimodular random graph in which  each vertex  is assigned an  infinite
trajectory in an ergodic invariant way (see \cite{ergodic}).  Particles, one from each vertex,
start moving along their trajectory in continuous time and annihilate when meeting.}
Is  the resulting process recurrent? \emph{Start by showing it on Euclidean lattices.}

	\item {Place $\epsilon$-balls (meteors) in Euclidean space with centers according to a unit intensity Poisson process. At time 0 each chooses a direction uniformly randomly and proceeds along this direction at unit speed (non-random). When two meteors collide, they annihilate.} Is the origin a.s.\ occupied by infinitely many meteors for all $d \geq 1$? {This is discussed in more detail in Section \ref{sec:trees}.}
\item {Have particles perform annihilating random walk on a graph where the particle started at $x$ steps according to an exponential clock with mean an independent uniform$(0,1)$ random variable.}
	Is this model on $\Z^d$ still recurrent? If so, what can be said of the limiting speed of the particles visiting the origin? {Possibly slower moving particles survive longer, and the average speed of particles visiting the origin decays with time.}

\end{enumerate}

\subsection*{Outline}

Section \ref{sec:main} starts with the proof of \thref{thm:main}. We also establish, in \thref{lem:rw}, that infinite expected occupation time is equivalent to survival of a nearest neighbor random walk in the voter model. \thref{cor:pt} relates this back to $p_t$. Sections \ref{sec:bounded} and \ref{sec:gw} contain the proofs of \thref{thm:bgw} (i) and (ii), respectively. Section \ref{sec:trees} discusses some non-backtracking variants, and contains the proof of site recurrence for a non-backtracking model on bounded degree trees and Galton-Watson trees with exponential tail.

\section{Site recurrence for coalescing random walk} \label{sec:main}

Coalescing random walk on a graph, $G= (V,E)$, has a graphical representation as follows: each edge is replaced with two directed edges and an independent Poisson process with unit intensity is placed on each directed edge, indexed by time. When the bell of a Poisson process for the edge $(u,w)$ rings we check if there's a particle at $u$ and if so, we move it to $w$. If there's already a particle at $w$, they merge. We denote the process $(\xi_t)_{t\geq 0}$ with $\xi_t\in \{0,1\}^V$ equal to the set of occupied vertices at time $t$, and occasionally $\xi_t^v$ for the location at time $t$ of the particle that began at $v$. 
In this notation we have $p_t(v) = \P(\xi_t(v)=1)$ is the probability that $v$ is occupied by a particle at time $t$. Thus, $\int_0^\infty p_t(v) dt$ is the expected occupation time of $v$. 


\begin{proof}[Proof of \thref{thm:main}]
If there is positive probability of infinite occupation time at $v$, then the expected occupation time is infinite. For the other direction we generalize 
\cite[Lemma 2]{arratia}.

 Suppose that $\int_0^{\infty}p_t(v) = \infty.$ For any $t \in [0, \infty)$ let $\sigma_t = \inf \{ s \geq t \colon  \xi_t(v) =1\} \in [0,\infty]$, the first time after $t$ that $v$ is occupied by a particle. We wish to establish that
$$\P(\sigma_t < \infty) =1, \qquad \forall t\geq 0.$$
The basic coupling $\xi_t^A = \{ \xi_t^x \colon x \in A\}$ for $A \subseteq V$ has the property that $A \subseteq B \subseteq V$ implies $\xi_t^A \subseteq \xi_t^B \subseteq \xi_t$, so the Markov process $(\xi_t^A \colon A \subseteq V)$, with state space $\{ 0,1\}^V = \{ A \colon A \subseteq V\}$ ordered by set inclusion, is attractive. Define $p_t^A = \P(\xi_t^A(v) =1),$ and also $I(t,u)= \int_{t}^u p_s(v) ds.$
  By assumption,
\begin{align}
\lim_{u \to \infty} I(t,u) = \infty, \qquad \forall t \geq 0. \label{eq:limit}
\end{align}

Let $f_{\sigma_t}$ be the density function of $\sigma_t$ and $\E_{\sigma_t}$ denote the expectation taken over all possible realizations of $\xi_{\sigma_t}$ given that $\sigma_t = r$.
 Using the strong Markov property we have for $t< u$,
\begin{align*}
I(t,u)  := \E \int_t^u \ind{ \xi_s(v) =1 } ds &= \int_{r=t}^u f_{\sigma_t}(r) \E_{\sigma_t}\left( \E \int_0^{u-r} \ind{\xi_s^A (v) =1} ds \right) dr \\
&\leq  \int_{r=t}^u f_{\sigma_t}(r) \left(\E \int_0^{u} \ind{ \xi_s(v) =1 } ds\right) dr\\
&\leq \P(\sigma_t \leq u) (t+I(t,u)).
\end{align*}
Dividing both sides by $t + I(t,u)$ we arrive at the inequality
\begin{align}\P ( \sigma_t \leq u ) \geq I(t,u) / ( t+I(0,u)) \label{eq:lb}.
\end{align}
 For fixed $t$, taking $u \to \infty$ yields $\P (\sigma_t < \infty) \geq 1$ by \eqref{eq:limit}. 

We conclude by describing the $0-1$ law. The above argument establishes that if CRW occupies a site for infinite time with positive probability, then it does so with probability 1. As $G$ is assumed to be connected, it follows that all sites are occupied infinitely often with probability 1. Therefore, the process is either site recurrent (recall this is defined as an almost sure event) or transient.  
\end{proof}
\begin{proof}[Proof of \thref{cor:ub}]
The lower bound on $\P(\sigma_t \leq u)$ at \eqref{eq:lb} yields an upper bound on the probability $v$ is unoccupied from time $t$ to $u$:
\begin{align}\P( \sigma_t > u )  \leq 1 - \f{ I(t,u) }{ t+ I(t,u) } =
\f{ t }{ t+ \int_t^u p_s(v) ds}. \label{eq:ub}
\end{align}
Which is the first part of the corollary. The second part follows by applying the bound on $p_t(v)$ in \thref{thm:bgw} (i) and integrating.
\end{proof}



\thref{thm:bgw} allows for site recurrence to be deduced by proving $p_t(v)$ is non-integrable. 
Our approach is to express $p_t(v)$ in another way. Consider the dual process to this model, which is called the \emph{voter model}. In the dual model we start with a partition of the space into clusters, where initially each vertex corresponds to a different cluster. When the bell at $(u,w)$ rings the vertex $w$ is added to the cluster containing $u$. We denote the process $(\zeta_t^v)_{t\geq 0}$ where $\zeta_t^v$ is the set of vertices belonging to the cluster that initially consists of the vertex $v$. If we run this model in reverse time, from time $t$ to $0$, we see the cluster that began at $v$ at time $t$ at time $0$ consists of exactly the particles that in the coalescing model are at $v$ at time $t$. In particular,
\begin{align}
p_t(v) = \P(\xi_t(v)=1) = \P(\zeta_t^v \neq \emptyset).
\label{eq:dualrel}
\end{align}

The advantage of working with the voter model is that the size of $\zeta_t^v$ is a nearest-neighbor symmetric random walk with transition rate depending on the number of boundary edges. Indeed, at any moment there are some directed edges going out of the cluster, and the same number of edges coming in. More precisely, the cluster size is a skip-free process on the integers 
that moves with rate equal to the size of the current boundary of the cluster. We record this fact in the following lemma. Let $|\cdot|$ denote either the counting measure of a finite set.

\begin{lemma} \thlabel{lem:rw}
Define $\zeta_t^v \se V$ to be the set of vertices in the cluster of $v$ at time $t$. 
 For each $x \in \zeta_t^v$ let $\partial_t(x) = \{ (x,y) \in E\colon y \notin \zeta_t^v\}$. Let $ \sum_{x \in \zeta_t^v} |\partial_t(x)|$ be the number of edges leading out of $\zeta_t^v$. The process has the following properties:
\begin{enumerate}[label = \emph{(\roman*)}]
    \item $|\zeta_{0}^v| = 1$.
    \item Let $ \tau = \inf\{t \colon |\zeta_t| = 0\}$. For all $t \geq \tau$ it holds that $|\zeta_t^v | = 0$.
    \item 
    The process is a martingale that transitions to $|\zeta_t^v| \pm 1$ at rate $\sum_{x \in \zeta_t^v} |\partial_t(x)|$.
\end{enumerate}

\end{lemma}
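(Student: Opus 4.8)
The plan is to read off all three properties directly from the microscopic dynamics of the voter-model dual, in which a ring of the Poisson clock on the directed edge $(u,w)$ reassigns the vertex $w$ to the cluster currently containing $u$. Property (i) is then immediate: at time $0$ the partition is into singletons, so the cluster of $v$ is $\{v\}$ and $|\zeta_0^v| = 1$.

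For property (ii), I would observe that the only mechanism by which a vertex $w$ can enter $\zeta_t^v$ is a ring on a directed edge $(u,w)$ whose tail $u$ already lies in $\zeta_t^v$; membership propagates only outward from vertices already in the cluster. Hence if $\zeta_t^v = \emptyset$ at some time, no subsequent clock ring can reintroduce a vertex, so $\emptyset$ is absorbing and $|\zeta_s^v| = 0$ for all $s \geq t$. Taking $\tau$ to be the first hitting time of $0$ gives exactly the stated claim.

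Property (iii) is the substantive part, and I would prove it by classifying the rings that change $|\zeta_t^v|$. A ring on a directed edge $(x,y)$ with $x \in \zeta_t^v$ and $y \notin \zeta_t^v$ adds $y$ and raises the size by $1$; the number of such edges is $\sum_{x \in \zeta_t^v} |\partial_t(x)|$, the outgoing boundary. A ring on $(u,w)$ with $u \notin \zeta_t^v$ and $w \in \zeta_t^v$ removes $w$ and lowers the size by $1$; these are the incoming boundary edges. Rings on edges internal to, or wholly external to, the cluster leave the size unchanged, and simultaneous rings occur with probability zero, so the size process is skip-free, moving by $\pm 1$. Since each directed edge carries a unit-rate clock, the up-rate and down-rate equal the cardinalities of the outgoing and incoming boundaries, respectively.

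The key point, and the step I expect to require the most care, is that these two rates coincide. Because $G$ is undirected, every undirected edge with exactly one endpoint in $\zeta_t^v$ contributes precisely one outgoing and one incoming directed edge, so the two boundaries have equal cardinality $\sum_{x \in \zeta_t^v}|\partial_t(x)|$. Equal up- and down-rates make the infinitesimal drift of $|\zeta_t^v|$ vanish, which yields the martingale property. The only remaining subtlety is integrability needed to pass from a local to a genuine martingale: the up-rate is at most $(\text{maximum degree}) \times |\zeta_t^v|$, so $|\zeta_t^v|$ is stochastically dominated by a pure-birth (Yule) process, giving finite expectation at each finite time on the bounded-degree graphs relevant to our application.
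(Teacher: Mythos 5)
Your proof is correct and follows essentially the same route as the paper: properties (i) and (ii) are read off from the construction, and (iii) is obtained by matching the outgoing and incoming directed boundary edges of the cluster, each carrying a unit-rate clock, so that the up- and down-rates both equal $\sum_{x \in \zeta_t^v}|\partial_t(x)|$ and the drift vanishes. Your closing remark on integrability (domination by a Yule process) is a small extra precaution the paper omits, but it does not change the argument.
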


\begin{proof}
Properties (i) and (ii) follow from the construction of $\zeta_t^v$. We turn our attention to property (iii). For each $x \in \zeta_t^v$, $x$ is removed from $\zeta_t^v$ at rate $|\partial_t(x)|$ and each of the $|\partial_t(x)|$ sites in $\partial_t(x)$ is added to $\zeta_t^v$ at rate $1$. Since the rates balance, $$\E [|\zeta_t^v| \mid |\zeta_{t_-}^v|] = |\zeta_{t_-}^v|,$$ which establishes $|\zeta_t^v|$ is a martingale.  Summing the rates over $x \in \zeta_t^v$ shows that $|\zeta_t^v|$ transitions to $|\zeta_t| + 1$, and to $|\zeta_t^v|-1$, each at rate $\sum_{x \in \zeta_t^v} |\partial_t(x)|$. 
\end{proof}

This lets us describe $p_t$ in terms of the voter model.

\begin{cor} \thlabel{cor:pt}
    $p_t(v) = \P( |\zeta_t(v)| >0 ).$ This is the probability a nearest neighbor random walk with transition rate $\sum_{x \in \zeta_t} | \partial_t(x) |$ and absorbing state at 0 is yet to reach zero at time $t$. 
\end{cor}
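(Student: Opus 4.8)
The plan is to read off the corollary as an immediate consequence of the duality identity \eqref{eq:dualrel} together with the structural description of the cluster-size process in \thref{lem:rw}. Since essentially all of the content has already been assembled by this point, the proof amounts to transcribing these two facts and observing how they fit together; the only genuine care required is in interpreting the phrase ``nearest neighbor random walk.''

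First I would establish the identity $p_t(v) = \P(|\zeta_t^v| > 0)$. This is nothing more than \eqref{eq:dualrel} rewritten: since $|\cdot|$ denotes the counting measure of the set $\zeta_t^v$, the event $\{\zeta_t^v \neq \emptyset\}$ coincides with $\{|\zeta_t^v| > 0\}$, and \eqref{eq:dualrel} already supplies $p_t(v) = \P(\zeta_t^v \neq \emptyset)$.

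Next I would identify $\P(|\zeta_t^v| > 0)$ with the survival probability of the described walk. By \thref{lem:rw}(i) the process $|\zeta_t^v|$ starts at $1$; by part (iii) it makes only $\pm 1$ steps, each at rate $\sum_{x \in \zeta_t^v} |\partial_t(x)|$; and by part (ii) the value $0$ is absorbing. Consequently the event $\{|\zeta_t^v| > 0\}$ is precisely the event that this skip-free process has not yet been absorbed at $0$ by time $t$, which is the asserted survival probability.

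The one subtlety --- and the only point I would be careful about --- is that $|\zeta_t^v|$ is not a Markov chain in its own right: its jump rate $\sum_{x \in \zeta_t^v} |\partial_t(x)|$ depends on the full configuration of the cluster $\zeta_t^v$ and not on its cardinality alone. Thus ``nearest neighbor random walk'' must be understood as the one-dimensional, skip-free process driven by this configuration-dependent rate, rather than as an autonomous birth--death chain. This causes no difficulty here, because the quantity of interest is only whether the process is strictly positive, and the event $\{|\zeta_t^v| > 0\}$ is unambiguously determined by the path of $|\zeta_t^v|$ regardless of how the rate is generated.
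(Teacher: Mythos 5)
Your proposal is correct and follows exactly the route the paper intends: the identity is \eqref{eq:dualrel} restated with $|\cdot|$ in place of nonemptiness, and the random-walk description is read off from parts (i)--(iii) of \thref{lem:rw}. The paper offers no separate proof of \thref{cor:pt} precisely because it is this immediate combination, and your remark that $|\zeta_t^v|$ is not an autonomous Markov chain (its rate depends on the cluster's configuration, not just its size) is a worthwhile clarification that is consistent with how the paper later uses the corollary via coupling.
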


 It follows that $p_t \to 0$ on any infinite, connected graph. 


\begin{proof}[Proof of \thref{prop:pt}] 
The transition rate in $\zeta_t^v$ is always at least two.
 By \thref{cor:pt} and a straightforward coupling we have $p_t$ is at most $\tilde{p}_t = \P(X_s>0,\,\,\forall s\leq t)$, with $X_s$ a rate-2 continuous time simple symmetric random walk started at $1$. Using the reflection principle together with the local central limit theorem, $\tilde{p}_t \sim 1/(2\sqrt{ \pi t})$ as $t\rightarrow\infty$, and the result follows. 
\end{proof}


\begin{remark} For coalescing walk on $\Z$ with nearest neighbour connections, since $\zeta_t^v$ is always of the form $\{x,x+1,...,x+k\}$ for some $x \in \Z,k \in \Z^+$, its transition rate is exactly $2\cdot 2 = 4$, so the above inequality is an equality, and gives the exact asymptotics $p_t \sim 1/(2\sqrt{\pi t})$, as observed in \cite{first}. Compared to \cite{first} there is an extra factor of $1/2$; our convention differs from theirs in that the transition of a particle at $v$ is equal to $\deg v$ and not $1$, since in our case $\deg v$ is allowed to vary.
\end{remark}

\subsection{Site recurrence for bounded degree graphs} \label{sec:bounded}
Now we turn our attention to proving site recurrence on general graphs. Define $\tau_v = \inf\{t\colon \zeta_t^v = \emptyset\}$. Integrating over $t$ in the duality relation \eqref{eq:dualrel} we find
$$\E \tau_v = \int_0^{\infty}\P(|\zeta_t^v|>0)dt = \int_0^{\infty}\P(\xi_t(v)=1)dt = \int_0^{\infty}p_t(v)dt.$$
So, proving site recurrence is equivalent to showing that the first hitting time of 0 for the simple random walk $|\zeta_t^v|$ (with random and time-varying transition rate) has infinite expectation. We start with the case when $G$ has bounded degree.
 
\begin{proof}[Proof of \thref{thm:bgw} (i)]
Let $v \in V$ with the maximum degree of vertices in $G$ bounded by $D$. \thref{lem:rw} establishes that the transition rate of $\zeta_t^v$ is less than or equal to $D|\zeta_t^v|$. Let $W_t$ be a continuous time nearest-neighbour random walk on $\Z^+ \cup \{0\}$ with $W_0=1$. The walk transitions from $k\in \mathbb Z^+ \cup \{0\}$ to $k \pm 1$ each at rate $Dk$, and is absorbed at $0$. Letting $\theta(t) = \P(W_t>0)$, if follows from \thref{cor:pt} and a straightforward coupling of $|\zeta_t^v|$ with $W_t$ that $p_t(v) \geq \theta(t)$, so it suffices to control $\theta(t)$.

The process $W_t$ can be interpreted as the number of particles in a branching process in which each particle independently dies, or gives birth to a single offspring, each at rate $D$. Let $\rho(t) = \P(W_t =0)$. By independence, for each $k\geq 0$ we have 
\begin{align}
\P(W_{t+h}=0 \mid W_h = k) = \rho(t)^k. \label{eq:rhok}
\end{align}
Recalling that $W_0=1$ then conditioning on $W_h$ for small $h>0$,
\begin{align*}
\rho(t+h) &= \P(W_{t+h}=0) = \sum_{k\geq 0}\P(W_{t+h}=0 \mid W_h=k)\P(W_h=k).
\end{align*}

The event that two or more transitions happens on the interval $[0,h]$ is contained in the event that a rate $2D$ exponential clock rings, then a rate $4D$ exponential clock rings (i.e.\ we go from 1 to 2 particles then another transition happens). The probability of this is bounded by the probability that $X+Y < h$ for $X$ and $Y$ rate $4D$ exponential random variables, and has density $f_{X+Y}(t) = \lambda^2 t e^{ - \lambda t }$ with $\lambda = 1/4D$. Integrating on $[0,h]$, then taking the Taylor expansion we have $\P(X+Y < h ) = O(h^2)$.
Since we will be dividing by $h$ and letting it tend to 0, we can combine all of the events that occur with two or more transitions into an $O(h^2)$ term. Using the expression \eqref{eq:rhok} this lets us write $\rho(t + h)$ as
\begin{align*}
\rho(t+h)&= \underbrace{1 \cdot Dh}_{\text{dies out}} + \underbrace{\rho(t)(1-2Dh)}_{\text{no change}} + \underbrace{\rho(t)^2Dh}_{\text{increases by 1}}  + \;\;O(h^2).
\end{align*}

Subtracting $\rho(t)$, dividing by $h$ and taking $h\downarrow 0$ this converges to the equation $\rho'=D(1-\rho)^2$. So, for the survival probability $\theta(t) =1-\rho(t)$ we find $\theta' = -\theta^2$, with $\theta(0)=1$, whose unique solution is $\theta(t) = 1/(1+Dt)$.
\end{proof}

\subsection{Site recurrence for Galton-Watson trees} \label{sec:gw}

A more general upper bound on the transition rate is $$|\text{maximum exposed degree}|\cdot|\zeta_t|.$$ Our hypothesis that the offspring distribution of our Galton-Watson tree has exponential tail guarantees that the maximum exposed degree is asymptotically bounded by $\log(\text{number of steps}).$ Ultimately this lets us compare with the divergent integral $\int_{t_0}^\infty  (t \log t)^{-1} dt.$ This is made rigorous below.

\begin{proof}[Proof of \thref{thm:bgw} (ii)]

Again, by \thref{cor:pt} it suffices to prove that $\zeta_t^v$ has infinite expected survival time. For convenience we denote $\zeta_t^v$ by $\zeta_t$.
Let $H_t = \cup_{s \leq t}\zeta_t$ be the vertices visited up to time $t$. Define the random times $0=t_1<t_2<...$ as when a vertex is added to $H_t$, and list them as $v_1,v_2,...$ in the order they are discovered, with $v_1$ being the root ($\rho$). The transition rate of $|\zeta_t|\rightarrow |\zeta_t| \pm 1$ is at most $M_t|\zeta_t|$ where
$$M_t = \sup\{\deg v\colon v \in H_t\}.$$
So, our first goal is to construct the voter model in such a way that $M_t$ can be easily controlled. A simple way to do this is to construct $G$ ``on the fly.'' That is, let $(X_i)_{i \geq 1}$ be an i.i.d.\ sequence of copies of the offspring distribution, and at time $t_i$, sample $X_i$ to determine the offspring distribution of $v_i$, which is then fixed for all $t>t_i$. This does not disturb the sample path distribution of $\zeta_t$, and has the advantage that the quantity
$$D_k = \max_{i \leq k} \deg v_i$$
is equal to $\max_{i \leq k}X_i$ where $X_i$ is a \emph{fixed} (as opposed to being a randomly indexed) i.i.d.\ sequence. Since, by assumption, $\P(X_i > x) \leq e^{-cx}$ for some $c>0$ and large enough $x$, a union bound gives $\P(D_k > x) \leq ke^{-cx}$, and setting $x = (3/c)\log k$,
\begin{align} \P(D_k > (3/c)\log k) \leq k^{-2} \label{eq:logmax} \end{align}
for large enough $k$.

Now, let $0=t_0<t_1<t_2<...$ denote the jump times of $\zeta_t$. In what follows we will want the set of jump times to be infinite, so if $\zeta_{t_i}=0$ (i.e.\ the cluster dies out), just include jumps back to 0 at rate 1. Since $|\xi_i|$ is a martingale with $\E|\zeta_{t_i}|=|\zeta_{0}|=1$ for all $i$, Doob's martingale inequality implies that for all $n > 0$
\begin{align}\P(\sup_{i}|\zeta_{t_i}|>bn) \leq (bn)^{-1}\label{eq:doob} \end{align}
for any $b>0$. Clearly $M_{t_i}$ is nondecreasing in $i$ and $M_{t_i} \leq D_{1+i}$, since vertices are exposed one at a time. Thus, the transition rate of $|\zeta_{t_i}|$ is at most $D_{i+1} |\zeta_{t_i}|$. Combining these observations with \eqref{eq:logmax} and \eqref{eq:doob}, we find that with probability at least $1 - (bn)^{-1} - n^{-2}$, for $t \leq t_{n^2}$ the transition rate in $\zeta_t$ is at most 
\begin{align}(3/c)\log(n^2+1) b n \leq (6b/c)n\log(n+1).\label{eq:ratebound} \end{align}
Let $m_n = m_n(b,c) = (6b/c)n \log(n+1)$. A quick summary: with high probability the first $n^2$ transitions happen at rate no more than $m_n$. Equivalently, the time $t_{n^2}$ is bounded below by the sum of $n^2$ independent exponentials with rate $m_n$. This is an Erlang distribution, $X_{n^2}$, with shape parameter $n^2$ and rate $m_n$. Thus, we have
$$\text{mean: } \mu_n = \f{n^2}{ m_n}, \text{ and variance: } \sigma_n = \f{\mu_n}{m_n}.$$
Chebyshev's inequality guarantees that 
\begin{align*}
\P(|X_n - \mu_n|\geq \mu_n/2) \leq \f{ \mu_n/m_n} { (\mu_n/2)^2 } = \f { 4 }{\mu_n m_n} = \f 4 {n^2}.
\end{align*}
One side of the above estimate is  
\begin{align}\P( X_{n^2} \leq \mu_n/2 ) \leq 4 n^{-2}\label{eq:erlang}.
\end{align}
By comparison, and using \eqref{eq:ratebound} and \eqref{eq:erlang}, we have
$$\P(t_{n^2} \geq n^2/(2m_n) ) \geq 1 - (bn)^{-1} - n^{-2} - 4 n^{-2}. $$
From the well-known first passage distribution for random walk, for the random variable $N = \inf\{n\colon |\zeta_{n}|=0\}$ we have $P(N>n^2) \geq c/n$ for some possibly smaller $c>0$. Note that although $m_n$ depends on $c$, shrinking $c$ does not affect the estimate. Letting $\tau = \inf\{t\colon|\zeta_t|=0\}$ as before, and letting $a_n = \mu_n/2 = cn/(12b\log(n+1))$, for $a_{n-1}<t<a_n$,
$$t \log t \geq \frac{c(n-1)}{12b\log n}\left( \log(n-1)-\log(12b/c)-\log\log n \right ) \geq \frac{cn}{24b}$$
i.e., $n \leq (24b/c)t\log t$ for $n$ large enough. For the same $t$, then,
$$\P(\tau > t) = \P(|\zeta_t| >0) \geq \P (|\zeta_{t_{n^2}}|>0, t_{n^2}>a_n ).$$
Since the survival time of the cluster is independent of the rate at which it jumps we have
$$\P(\tau >t) \geq c/n( 1 - (bn)^{-1} - O(n^{-2} ) ), \qquad a_{n-1} < t < a_n.$$
The right side is at least $c/(2n)$ for $n$ greater than some $n_0$. Letting $t_0 = a_{n_0}$ and using the upper bound on $n$,
$$\E \tau = \int_0^{\infty}\P(\tau>t)dt \geq \frac{c^2}{48b}\int_{t_0}^{\infty}\frac{1}{t\log t}dt = \infty.$$
\end{proof}

\section{Non-backtracking coalescing random walk on trees}
\label{sec:trees}


We are also interested in understanding similar, but less random processes. A lack of symmetry in these settings makes it difficult to apply known techniques. We are hopeful that progress will lead to new ideas.

 The \emph{non-backtracking coalescing random walk} is defined in the same way as the coalescing random walk with particles instead performing non-backtracking random walk. More precisely, the state of a particle is specified by a vertex-edge pair $(u,\{u,v\})$, and when an edge clock rings at a directed edge $(u,w)$, the particle moves to $w$ if and only if $w \neq v$. If the particle moves from $u$ to $w$, its state is updated to $(w,\{w,u\})$, so that its next jump cannot be back to $u$. It will be convenient to assume that each particle is initialized with a uniformly chosen edge along which it cannot move, that is, the particle initially at $v$ has state $(v,\{v,u\})$ where $u$ is a uniform random neighbour of $v$.
 With particles coalescing there is ambiguity about whose path to remember. There are several well-defined ways to assign priority. On a rooted tree, we analyze the special case where we always remember the path of particles moving towards the root. With the model defined in this way we do not quite have a voter model dual, but a closely related process does. Analogous to \thref{thm:main} we prove a necessary and sufficient condition for site recurrence.

\begin{prop} \thlabel{thm:trees} Consider coalescing non-backtracking random walk on a rooted tree with priority given to particles moving towards the root. The process is site recurrent at the root if and only if the expected survival time of the cluster in a certain voter model is infinite.
\end{prop}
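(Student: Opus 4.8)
The plan is to mirror the proof of \thref{thm:main} as closely as possible, identifying exactly where the non-backtracking dynamics and the root-priority rule force a change in the dual object. The key structural observation is that \thref{thm:main} never used any special feature of simple random walk: the strong Markov argument at \eqref{eq:lb}, the attractiveness of the basic coupling, and the resulting $0$--$1$ law all go through for \emph{any} coalescing particle system with a monotone graphical construction, provided one has a duality relation of the form \eqref{eq:dualrel} expressing the occupation probability $p_t(\rho)$ of the root as the survival probability of some dual cluster. So the first step is to verify that the general machinery of \thref{thm:main} applies to the non-backtracking process: the graphical representation on directed edges still yields a monotone (attractive) system in the variables $\xi_t^A$, so expected occupation time at the root is infinite iff the root is site recurrent iff the $0$--$1$ law places us in the recurrent regime. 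This reduces the proposition to showing that $\int_0^\infty p_t(\rho)\,dt = \infty$ is equivalent to infinite expected survival of the appropriate dual cluster.

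The heart of the matter is therefore constructing the correct dual. For ordinary CRW the duality \eqref{eq:dualrel} comes from time-reversing the graphical representation: the set of particles sitting at $v$ at time $t$ is exactly the backward voter cluster started from $(v,t)$. The plan is to run the same time-reversal for the non-backtracking construction. The obstacle, flagged in the text (``we do not quite have a voter model dual, but a closely related process does''), is that non-backtracking motion is not a function of the occupied \emph{set} alone — each particle carries an internal edge-state, and the root-priority coalescence rule breaks the symmetry that makes the ordinary dual a clean voter model. Concretely, I would track not a subset of $V$ but a cluster of \emph{oriented} states (vertex-edge pairs), and check that time-reversing the edge Poisson processes produces a well-defined Markovian cluster-growth process on these oriented states whose nonemptiness at time $t$ equals $\{\xi_t(\rho)=1\}$. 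The priority-toward-the-root rule is what makes this reversal consistent: because conflicts at a coalescence are always resolved in favor of the root-bound particle, on a tree there is a unique particle occupying the root whose backward trajectory is deterministic given the clocks, and its ancestry under reversal is exactly the ``certain voter model'' cluster named in the statement.

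Once the dual cluster $\zeta_t$ is correctly defined, the remaining step is the integrated duality identity
\begin{align}
\E\,\tau_\rho = \int_0^\infty \P(|\zeta_t| > 0)\,dt = \int_0^\infty p_t(\rho)\,dt,
\end{align}
obtained by integrating the pointwise relation $p_t(\rho) = \P(\zeta_t \neq \emptyset)$ over $t$, exactly as in the bounded-degree section. Combining this with the $0$--$1$ law from the first step gives the stated equivalence between site recurrence at the root and infinite expected survival of the dual cluster. The hardest part, I expect, is the middle step: verifying that the time-reversed non-backtracking-with-priority dynamics genuinely define a Markov cluster process and pinning down its transition rule, since the internal edge-states and the asymmetric coalescence rule mean one cannot simply quote the classical voter duality of \cite{holley1}. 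I would isolate this as a separate lemma — the analogue of \thref{lem:rw} — describing the oriented cluster's growth rates, and I anticipate that the tree structure (no cycles, a well-defined notion of ``toward the root'') is precisely what rescues Markovianity, so the argument should not extend verbatim to general graphs.
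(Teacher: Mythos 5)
Your proposal correctly isolates the difficulty---the non-backtracking dynamics are not a function of the occupied set, so the classical voter duality cannot be quoted---but the step you defer to a future lemma (time-reversing the oriented-state process and hoping the tree structure ``rescues Markovianity'') is precisely where the proof lives, and the route you sketch is not the one the paper takes, nor is it clear it can be completed. The paper's key idea is a \emph{reduction}, not an enlargement of the state space: under the priority-to-the-root rule, a particle that has turned away from the root is inert. It can never return to the root (on a tree, moving away without backtracking means moving away forever) and it can never block a root-bound particle (the priority rule always sacrifices it). Hence the occupation time of the root is unchanged if such particles are deleted the instant they turn away. This converts the process into a memoryless system on $V\cup\{\mathfrak a\}$ in which a particle at $v\neq\rho$ moves toward the root at rate $1$ and is absorbed at $\mathfrak a$ at rate $d_v-2$ (rate $d_\rho-1$ at the root). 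That system has an ordinary monotone graphical representation and an ordinary voter dual---clusters grow away from the root at rate $1$ along each down-going edge, and each vertex joins the cluster of $\mathfrak a$ at rate $d_v-2$---and the argument of \thref{thm:main} then applies verbatim. Identifying this dual \emph{is} the ``certain voter model'' of the statement, i.e.\ the content of the proposition.

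Two concrete gaps in your version. First, you assert at the outset that the non-backtracking graphical representation ``still yields a monotone (attractive) system in the variables $\xi_t^A$''; this is not obvious, because a particle's state includes its forbidden edge, and adding particles can change which of two colliding particles survives and hence the direction-memory, and therefore the entire future trajectory, of the survivor. The paper proves (and needs) monotonicity only for the reduced memoryless model. Second, your proposed dual on oriented vertex-edge states is never constructed; time-reversing the original process would require tracking the trajectories of away-moving particles, since collisions are resolved using the directions of motion of \emph{both} colliding particles, and it is exactly to avoid this that the away-moving particles are deleted before dualizing. Without that deletion step your argument does not close.
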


We can deduce site recurrence on bounded degree trees and some trees with unbounded degree. 

\begin{thm} \thlabel{cor:trees}
The process from \thref{thm:trees} is site recurrent at the root of either a bounded degree tree or a Galton-Watson tree whose offspring distribution is as in \thref{thm:bgw} (ii).
\end{thm}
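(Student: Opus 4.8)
The plan is to reduce, via \thref{thm:trees}, to showing that the cluster in the relevant non-backtracking voter model has infinite expected survival time, and then to rerun the two arguments behind \thref{thm:bgw}. Write $\eta_t$ for this dual cluster, $\tau = \inf\{t : \eta_t = \emptyset\}$ for its extinction time, and $H_t = \bigcup_{s \le t}\eta_s$ for the vertices it has exposed up to time $t$. By \thref{thm:trees} it suffices to prove $\E\tau = \infty$ in each case. The two structural facts I would want are exact analogues of \thref{lem:rw}: first, that $|\eta_t|$ is a mean-one martingale, so $\E|\eta_t| = 1$ for all $t$; and second, that $|\eta_t|$ is skip-free and jumps at rate at most $M_t\,|\eta_t|$, where $M_t = \sup\{\deg v : v \in H_t\}$ is the largest degree exposed so far. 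Granting these, the rest is a transcription of Sections~\ref{sec:bounded} and~\ref{sec:gw}.

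For a bounded degree tree with maximum degree $D$, the rate bound reads $M_t|\eta_t| \le D|\eta_t|$, so $|\eta_t|$ is dominated, by the same coupling used for \thref{thm:bgw}~(i), by the continuous time branching process $W_t$ that from state $k$ jumps to $k \pm 1$ each at rate $Dk$ and is absorbed at $0$. Its survival probability solves $\theta' = -\theta^2$ with $\theta(0)=1$, giving $\theta(t) = (1+Dt)^{-1}$, which is not integrable; hence $\E\tau \ge \int_0^\infty \theta(t)\,dt = \infty$ and the process is site recurrent at the root. For a Galton--Watson tree with the exponential-tail offspring law of \thref{thm:bgw}~(ii), I would again build the tree ``on the fly,'' exposing the offspring of the $i$th discovered vertex as it enters $H_t$, so that the largest degree among the first $k$ exposed vertices is the maximum $D_k = \max_{i \le k} X_i$ of a fixed i.i.d.\ sequence; the union bound \eqref{eq:logmax} then still gives $\P(D_k > (3/c)\log k) \le k^{-2}$. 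Using $\E|\eta_t| = 1$, Doob's inequality \eqref{eq:doob} controls $\sup_i |\eta_{t_i}|$, and exactly as in \eqref{eq:ratebound} the first $n^2$ jumps occur, with high probability, at rate at most $m_n = (6b/c)n\log(n+1)$. The Erlang/Chebyshev estimate \eqref{eq:erlang} forces $t_{n^2} \ge n^2/(2m_n)$ with probability $1 - O(1/n)$, while the embedded first-passage bound $\P(N > n^2) \ge c/n$ yields $\P(\tau > t) \ge c/(2n) \gtrsim (t\log t)^{-1}$ for $a_{n-1} < t < a_n$, and integrating against $\int_{t_0}^\infty (t\log t)^{-1}\,dt = \infty$ gives $\E\tau = \infty$.

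The real work lies in the two structural facts, and this is where the non-backtracking dynamics and the root-priority rule must be handled with care. In the ordinary voter model the martingale identity of \thref{lem:rw} comes from the exact balance, at each cluster vertex $x$, between the rate $|\partial_t(x)|$ at which $x$ is expelled and the total rate $|\partial_t(x)|$ at which its exposed neighbours are recruited. Under the non-backtracking rule each cluster element carries a forbidden direction, so the in- and out-rates across the boundary are no longer matched edge-by-edge, and on a rooted tree the root-priority convention treats the edge toward the root asymmetrically. I would establish both the martingale identity and the rate bound by a direct accounting of the jump rates in the dual constructed for \thref{thm:trees}, using that on a tree the cluster is a connected subtree whose boundary edges each admit a single admissible crossing. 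The anticipated payoff is that the forbidden directions only \emph{lower} transition rates, so the bound by $M_t|\eta_t|$ survives, while the tree structure keeps the gain and loss rates in balance so that $|\eta_t|$ remains a martingale.

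Verifying that this balance is exact, rather than merely approximate, is the main obstacle. If it fails and $|\eta_t|$ turns out to be only a submartingale, then $\E|\eta_t| = 1$ is no longer available and one would need a replacement for the Doob step in the Galton--Watson argument; in that event I would fall back on the bounded degree comparison, which only uses the rate bound and a branching-process domination rather than any martingale normalization, and seek a separate lower bound on $|\eta_t|$ for the Galton--Watson case.
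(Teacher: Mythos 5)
Your proposal follows the paper's route exactly: reduce via \thref{thm:trees} to showing the dual cluster has infinite expected survival time, establish a martingale identity together with a transition-rate bound of the form $M_t\,|\eta_t|$, and then transcribe the arguments of \thref{thm:bgw}~(i) and (ii) verbatim. The step you defer as ``the main obstacle'' is precisely the content of the paper's \thref{lem:btm}, and it does go through: the balance is exact, $|\eta_t|$ is a genuine martingale, and your fallback plan is not needed. Two small corrections to your heuristic for why the balance holds. First, the dual cluster need not be a connected subtree --- an interior vertex can be zapped into the cluster of $\mathfrak{a}$ at rate $d_v-2$, disconnecting what remains --- but the computation does not use connectivity; it is carried out for an arbitrary finite cluster $W$. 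Second, the balance is not edge-by-edge but global: each $w \in W$ contributes $(d_w-1)$ minus the number of its children in $W$ to the gain rate (recruitment along down-going edges), and $(d_w-2)+\mathbf{1}(\hat w \notin W) = (d_w-1)-\mathbf{1}(\hat w \in W)$ to the loss rate (zapping plus possible recruitment by a parent outside $W$); summing over $W$, both correction terms count the internal edges of $W$, so $r^+ = r^-$. With that identity in hand, the rate bound $\sum_{w\in\eta_t} d_w \le M_t|\eta_t|$ and the remainder of your transcription are exactly what the paper does.
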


Non-backtracking removes a vital symmetry from the argument. The proof goes by, once again, constructing a dual voter process and showing the cluster of the root survives for an infinite expected amount of time. Our ``priority to the root" rule is hand-picked to preserve monotonicity and the existence of a dual voter model. Neither property exists in other equally natural non-backtracking models. Further progress in these different settings will likely require a new approach. Consider the following conjecture:

\begin{conjecture}
Non-backtracking coalescing random walk with any priority scheme is site recurrent on bounded degree trees. 
\end{conjecture}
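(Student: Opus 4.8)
The plan is to mirror the strategy used for \thref{thm:bgw}, now routed through the modified dual described in \thref{thm:trees}. By that proposition, it suffices to show the associated voter-type cluster of the root survives for infinite expected time, and by the same integration-over-time identity as in Section \ref{sec:bounded}, this reduces to a statement about the first hitting time of $0$ for the (random-rate) cluster-size process. The key structural point I would want to verify first is that, despite the loss of the full voter duality, the ``priority to the root'' rule still yields a process whose size $|\zeta_t|$ is a martingale (or at least a supermartingale, which would suffice for a lower bound via Doob), with a transition rate controlled by the maximum exposed degree times the current size. The non-backtracking constraint means each vertex contributes at most $\deg(v)-1$ outgoing directions rather than $\deg v$, so the rate bound $M_t |\zeta_t|$ should persist with only cosmetic changes.

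For the bounded degree case I would essentially replay the branching-process comparison from the proof of \thref{thm:bgw} (i): couple $|\zeta_t|$ with a continuous-time walk $W_t$ on $\ZZ^+\cup\{0\}$ having up/down rate $Dk$ from state $k$ and absorption at $0$, interpret $W_t$ as a critical binary branching process, and solve the resulting ODE $\theta' = -\theta^2$ to obtain $\theta(t) = (1+Dt)^{-1}$, whose integral diverges. For the Galton-Watson case I would import wholesale the machinery of the proof of \thref{thm:bgw} (ii): construct the tree ``on the fly,'' use the exponential-tail hypothesis and a union bound to get $\P(D_k > (3/c)\log k)\le k^{-2}$, apply Doob's inequality \eqref{eq:doob} to bound $\sup_i |\zeta_{t_i}|$, and combine these through the Erlang/Chebyshev estimate to conclude $t_{n^2}$ is bounded below by roughly $n^2/m_n$ with $m_n = (6b/c) n\log(n+1)$. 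The first-passage estimate $\P(N > n^2)\ge c/n$ then feeds into the comparison $\E\tau \ge \frac{c^2}{48b}\int_{t_0}^\infty (t\log t)^{-1}\,dt = \infty$, exactly as before.

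The main obstacle I anticipate is \emph{not} the estimates, which transfer almost verbatim, but establishing the two properties that make those estimates applicable in the non-backtracking setting: (a) that $|\zeta_t|$ is genuinely a martingale, and (b) that the first-passage law for the cluster size is still that of a symmetric skip-free walk, so that $\P(N > n^2)\ge c/n$ holds. In the ordinary voter model the in-rate and out-rate balance automatically because each boundary edge is symmetric; with non-backtracking and a directional priority rule, I would need to check carefully that outgoing transitions (a particle leaving the cluster) and incoming transitions (a new vertex joining) still occur at equal rates, so that the symmetric skip-free structure survives. The ``priority to the root'' choice is presumably what rescues this—the authors remark it is ``hand-picked to preserve monotonicity and the existence of a dual voter model''—so the crux of the argument is to make the correspondence between this priority scheme and a bona fide voter cluster precise enough that \thref{lem:rw} has a faithful analogue. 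Once that analogue is in hand, both parts of \thref{cor:trees} follow by the arguments already given for \thref{thm:bgw}.
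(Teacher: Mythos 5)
The statement you are trying to prove is a \emph{conjecture}, and the paper does not prove it; it is stated precisely because the authors' method does not reach it. Your proposal, as written, only addresses the special case where priority is given to particles moving towards the root --- but that case is already \thref{cor:trees}, proved via \thref{thm:trees} and \thref{lem:btm}. The conjecture asks for site recurrence under \emph{any} priority scheme, and your entire argument is routed through exactly the two properties that the paper explicitly says fail for other priority rules: monotonicity (resetting to the full configuration maximizes future occupation of the root) and the existence of a dual voter model. As the authors note in Section \ref{sec:treedual}, for a general priority rule the process depends on past information, so running it in reverse requires knowledge of the future and there is no voter dual; without the dual there is no cluster process $\zeta_t$ whose size is a martingale, no analogue of \thref{lem:rw} or \thref{lem:btm}, and hence no reduction to a first-passage problem for a symmetric skip-free walk. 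You correctly identify points (a) and (b) as the crux, but then resolve them by invoking the ``priority to the root'' choice --- which is precisely the restriction the conjecture is asking you to remove.

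Concretely: the step where you claim the in-rate and out-rate of the cluster balance relies on the specific deletion rule (annihilate the particle moving away from the root; otherwise annihilate the occupant) that makes away-moving particles inert and allows them to be deleted on the spot, collapsing the model to one with an absorbing state $\mathfrak{a}$ and a genuine graphical representation reversible in time. For a different priority scheme --- say, always keep the particle that arrived first, or keep the one moving away from the root --- the surviving particle's forbidden edge depends on which history won the collision, the away-moving particles are no longer inert, and the rate-balancing computation in \thref{lem:btm} has no analogue. A proof of the conjecture would need either a new construction of a comparison process that works without duality, or a direct argument for infinite expected occupation time that does not pass through a voter-model cluster. Your proposal does not supply either, so it proves only what the paper already proves.
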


The inspiration for studying non-backtracking processes comes from the following \emph{meteor model} on $\mathbb R^d$. Place $\epsilon$-balls in Euclidean space with centers according to a unit intensity Poisson process. At time 0 each chooses a direction uniformly randomly and proceeds along this direction at unit speed (non-random). When two meteors collide, they annihilate. 

\begin{conjecture}
The origin a.s.\ is occupied by infinitely many meteors for all $d \geq 1$.
\end{conjecture}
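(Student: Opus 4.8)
The plan is to recast the conjecture as an occupation-time statement in the spirit of \thref{thm:main} and then to settle the base case $d=1$ before bootstrapping to higher dimensions. Write $q_t$ for the probability that the origin lies inside some meteor at time $t$. A surviving meteor covers the origin for a time interval of length $2\epsilon$ as it passes, so ``the origin is occupied by infinitely many meteors'' is equivalent to infinitely many distinct meteors crossing the origin. The first step is thus to show that infinitely many meteors reach the origin almost surely, which I would do through a second-moment / Borel--Cantelli argument applied to the events $A_i = \{\text{the $i$-th meteor reaches the origin before annihilating}\}$. The two ingredients are (a) $\sum_i \P(A_i) = \infty$, and (b) enough decorrelation among the $A_i$ to upgrade this to $\P(A_i \text{ i.o.}) = 1$. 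The delicate point throughout is that, unlike the coalescing process, annihilation is \emph{not} attractive, so the monotonicity and voter-model duality underpinning \thref{thm:main} are unavailable and must be replaced by direct estimates.

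For $d=1$ the directions are $\pm 1$, and I would work in the space--time picture: each meteor is a ray of slope $\pm 1$ in the $(x,t)$-plane, and annihilation occurs when an oppositely-sloped pair first meets. The key structural fact is that the entire collision pattern is governed by a matching that can be read off from the partial sums $S_k = \tsum_{j \le k} v_j$ of the signed directions $v_j \in \{\pm 1\}$ of the meteors listed by starting position. A right-moving meteor at $-x$ reaches the origin precisely when it survives for time $x$, and this survival event is a first-passage / ballot-type condition on the walk $(S_k)$. Standard fluctuation theory for symmetric random walk then gives $\P(A_i)$ of order $i^{-1/2}$ (the $i$-th meteor sits at distance $\asymp i$ and must survive time $\asymp i$), so $\sum_i \P(A_i) = \infty$, supplying (a). For (b) I would estimate $\P(A_i \cap A_j)$ using the locality of the matching---two far-apart meteors interact only through the intervening annihilations---to show the correlations decay fast enough for a Borel--Cantelli-type conclusion. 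This reflects the known $t^{-1/2}$ decay of the survivor density in symmetric one-dimensional ballistic annihilation, and should make the origin recurrent for essentially the reason a one-dimensional random walk is.

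For general $d$ the new feature is aiming: a meteor at distance $r$ from the origin enters an $\epsilon$-neighbourhood of it only if its uniformly chosen direction lies in a solid angle of order $(\epsilon/r)^{d-1}$. Counting meteors in a unit-width shell at radius $r$, the expected number aimed at the origin is of order $r^{d-1}(\epsilon/r)^{d-1} = \epsilon^{d-1}$, so the contributions over $r = 1,2,3,\dots$ sum to infinity even before survival is taken into account. The plan is to run the same Borel--Cantelli scheme over the aimed meteors, using a comparison with the independent (non-annihilating) system---in which every aimed meteor trivially arrives---to organize the bookkeeping, and then to argue that the meteors travelling nearly radially toward the origin behave like a quasi-one-dimensional ballistic-annihilation system to which the $d=1$ survival estimates apply. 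The hard part, and where I expect the main obstacle to lie, is exactly this last reduction: because the process is non-monotone one cannot dominate it by a solvable surrogate, so controlling how many aimed meteors annihilation destroys---and more generally obtaining the correlation decay (b) in the presence of the long-range, deterministic dependence created by ballistic motion---requires genuinely new estimates rather than the martingale and duality tools used earlier. I would treat the non-backtracking coalescing walk on trees of \thref{cor:trees} as the discrete testing ground for these ideas, since it is the natural lattice analogue in which a priority rule can restore the monotonicity that the continuous meteor model lacks.
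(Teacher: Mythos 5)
There is no proof to compare against: in the paper this statement is an open \emph{conjecture} (and item (c) of the Further Questions), explicitly described as appearing ``quite difficult,'' and the authors prove nothing about the meteor model beyond proposing discrete surrogates. What you have written is a research programme, not a proof, and by your own admission its central step is missing. Concretely, your Borel--Cantelli scheme requires two estimates, and neither is established. For ingredient (b), even in $d=1$, you need quantitative decay of $\P(A_i\cap A_j)-\P(A_i)\P(A_j)$ for the survival-and-arrival events in ballistic annihilation. The matching structure of the two-speed model gives you marginals of ballot type, but the joint law of two survival events is governed by the excursion structure of the sign walk between and around the two meteors, and ``locality of the matching'' is exactly the kind of long-range, non-monotone dependence the paper identifies as the obstruction: annihilation is not attractive, there is no voter-model dual, and so none of the machinery behind \thref{thm:main} (the strong Markov restart argument, the $0$--$1$ law, the martingale cluster size) applies. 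Asserting that correlations ``decay fast enough'' is the theorem, not a step toward it.

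For $d\geq 2$ the gap is worse, because even ingredient (a) is unproved. Your first-moment count of aimed meteors (order $\epsilon^{d-1}$ per unit shell) deliberately ignores survival, but the event $A_i$ requires the aimed meteor at distance $r$ to avoid annihilation for time $r$, and no rigorous lower bound on this survival probability is known: in $d\geq 2$ there is no matching or exact solvability, the collision set of a ball moving ballistically among other balls is a genuinely $d$-dimensional object, and the ``quasi-one-dimensional reduction'' of nearly radial meteors has no justification --- two meteors aimed at the origin from nearby angles are destroyed predominantly by \emph{transverse} traffic, not by each other. If the survival probability of an aimed meteor at distance $r$ decays faster than $1/(r\log r)$ (which is entirely consistent with the heuristic density decay in higher-dimensional ballistic annihilation), then $\tsum_i\P(A_i)<\infty$ and your whole scheme collapses at the first moment. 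Your instinct to test these ideas on the non-backtracking tree models of \thref{cor:trees} matches the authors' own suggestion, but note that their tractability there rests precisely on a hand-picked priority rule restoring monotonicity and a dual --- the two properties the meteor model lacks and that your proposal does not replace.
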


This problem appears quite difficult. It could be discretized to an annihilating system of random walks by uniformly assigning each particle a geodesics to $\infty$ from which it never deviates and steps along according to a Poisson clock. The integer lattice is a natural graph to start with. Or, perhaps hyperbolic space -- in which random walk  paths stay within
a logarithmic neighborhood of a geodesic -- would be a more tractable place to study this problem. 
%

\subsection{Recovering a dual} \label{sec:treedual}

Since the graph is a tree, a particle either moves towards the root, or away, at each jump. Once it has moved away for the first time, at subsequent jumps it must always move away, since the only way back towards the root requires backtracking. To simplify matters, we suppose that at each vertex, the initial forbidden edge is chosen uniformly from the edges that lead away from the root.

Since coalescing particles may have different histories we must decide which one to remember. Therefore, upon collision we define the following rule for annihilating exactly one of the two colliding particles; in the original setting with memoryless walks, any such rule leads to the coalescing model.
 \begin{itemize}
     \item If one particle is moving towards the root, and the other particle is moving away, annihilate the particle moving away.
     \item Otherwise, annihilate the particle currently occupying the site (i.e.\ keep the particle that just moved).
 \end{itemize}

As currently stated, this process depends on past information. Running the process in reverse would require information about the future. Thus, it does not have a dual voter model. Still, a simple observation yields a related model that does have a dual.


Let $X_T$ be the occupation time of the root up to time $T$. Notice that particles moving away from the root are inert; based on the rule above they cannot block upward moving particles, and as noted before they cannot revisit the root. Thus, $X_T$ is unchanged if we delete particles the instant they turn away from the root. This modification gives the following model; for a vertex $v$, let $d_v = \deg v$.
\begin{itemize}
 \item Suppose $v$ is not the root.
 \begin{itemize}
  \item A particle at $v$ moves towards the root at rate $1$.
  \item A particle at $v$ is deleted at rate $d_v-2$.
 \end{itemize}
 \item Suppose $v=\rho$ is the root. A particle at $\rho$ is deleted at rate $d_{\rho}-1$.
\end{itemize}
We can think of this model as follows: each particle attempts to travel up to the root, coalescing with other particles upon collision, and particles (or coalesced collections of particles) are instantaneously zapped out of existence at some rate that depends on their present location.

We can simplify the description somewhat by introducing a single absorbing vertex $\mathfrak{a}$ and considering the process on $V \cup \{ \mathfrak{a}\}$, where every vertex has a directed edge pointing to $v$ which rings at rate described below. 

Particles at $\mathfrak{a}$ do not move. The transitions for particles at $v \in V$ are as follows.
\begin{itemize}
\item Move towards the root at rate $1$.
\item Move directly to $\mathfrak{a}$ at rate $d_v-2$.
\end{itemize}
A graphical representation of this model can be obtained by placing an independent Poisson process with rate $1$ at each upward directed edge, and with rate $d_v-2$ at each vertex $v$.

The model enjoys the same monotonicity as the coalescing random walks -- resetting to the initially full configuration maximizes the probability of occupying the root in the future. Then, as for the coalescing random walks, there is a dual voter model. In this case, deletion of a particle at $v$ corresponds to the addition of $v$ to the cluster of $\mathfrak{a}$. Note that since the direction of motion is reversed in the voter model, clusters on the tree must expand away from the root. Altogether, the voter model has the following transitions. A \emph{down-going} directed edge $(w,u)$ is an edge directed away from $\mathfrak{a}$. These are the rules we use in the proof of \thref{lem:btm}. We box it for emphasis:
\begin{framed}
\begin{itemize}[leftmargin = .8 cm]
\item Along each down-going edge $(w,u)$, at rate $1$, $u$ is added to the cluster containing $w$.
\item At each vertex $v$ on the tree, at rate $d_v-2$, $v$ is added to the cluster of $\mathfrak{a}$.
\end{itemize}
\end{framed}
The existence of a dual lets us establish \thref{thm:trees}.

\begin{proof}[Proof of \thref{thm:trees}]
We have established that $X_T$ in the non-backtracking coalescing random walk has the same distribution as in the simpler model. Since the simpler model has monotonicity and a voter model dual, the same argument used to prove \thref{thm:main} gives the desired equivalence.
\end{proof}

\subsection{Site recurrence}

Now we can turn our attention to proving infinite expected survival time of the root cluster in the voter model. The voter model from Section \ref{sec:treedual} also has the martingale property. As before, let $\zeta_t^v$ denote the cluster that began at $v$.
\begin{lemma}\thlabel{lem:btm}
Consider the voter model $\zeta_t^v \in V \cup \{\mathfrak{a}\}$ described above. For each $v \in V$, so long as $|\zeta_t^v|<\infty$, the size of the cluster $\zeta_t^v$ is a martingale. It transitions to $|\zeta_t| \pm 1$ at rate $\sum_{w \in \zeta_t} (d_w -1) - 2|\{(w,y) \colon w,u \in \zeta_t^v\}|$.
\end{lemma}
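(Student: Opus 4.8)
The plan is to imitate the proof of \thref{lem:rw}: localise the generator of the cluster-size process by computing separately the rate at which $|\zeta_t^v|$ gains a vertex and the rate at which it loses one, and then show the two rates agree. Write $C=\zeta_t^v$ and recall the boxed dynamics: along a down-going edge $(w,u)$ the child $u$ adopts the cluster of its parent $w$ at rate $1$, and each $w$ is absorbed into the cluster of $\mathfrak{a}$ at rate $d_w-2$. The key structural observation is that a vertex can only ever copy the label of its parent or of $\mathfrak{a}$, so the label $v$ propagates strictly away from the root; consequently (on the event $|C|<\infty$, where all the counts below are finite) $C$ is a disjoint union of finite subtrees whose ``top'' vertices are exactly those $u\in C$ with $\mathrm{pa}(u)\notin C$. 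This is what makes the boundary counting tractable.

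Next I would enumerate the transitions that change $|C|$. A gain of $+1$ happens precisely when an outgoing down-edge fires, i.e.\ an edge $(w,u)$ with $w\in C$ and a child $u\notin C$; writing $E_C$ for the number of internal edges of $C$, there are $\sum_{w\in C}(d_w-1)-E_C$ of these. A loss of $-1$ happens either when the incoming edge at a top fires (one per top, total $t_C:=|\{u\in C:\mathrm{pa}(u)\notin C\}|$) or when an absorption clock at some $w\in C$ fires (total rate $\sum_{w\in C}(d_w-2)$). The martingale property then reduces to the identity
\[
\sum_{w\in C}(d_w-1)-E_C \;=\; t_C + \sum_{w\in C}(d_w-2),
\]
which is immediate from $\sum_{w\in C}\bigl[(d_w-1)-(d_w-2)\bigr]=|C|$ together with the forest relation $E_C+t_C=|C|$. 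Reading off the common value gives the displayed transition rate, and the equality of up- and down-rates gives that $|\zeta_t^v|$ is a martingale so long as it is finite.

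The step I expect to be the main obstacle is the boundary bookkeeping at the root, which is exactly where the absorbing vertex $\mathfrak{a}$ must earn its keep. A generic top $u$ balances its one surplus outgoing edge against the single incoming edge from $\mathrm{pa}(u)$, and this is what makes the identity close; but the root $\rho$ has no parent inside $V$, carries $d_\rho$ children rather than $d_\rho-1$, and is absorbed at the special rate $d_\rho-1$. I would handle this uniformly by treating $\mathfrak{a}$ as the parent of $\rho$, so that the edge $(\mathfrak{a},\rho)$ supplies the missing incoming transition and the remaining absorption clock at $\rho$ plays its part, and then check that the identity above is restored verbatim. This is the one place where the directed (rather than symmetric) nature of the voter edges could break the balance that held automatically in \thref{lem:rw}: one must verify that the deletion rate $d_w-2$ is exactly the value needed to cancel the surplus of children recorded by the internal-edge count, and the root term is the delicate case to get right.
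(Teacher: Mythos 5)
Your proposal is correct and follows essentially the same route as the paper: you compute the up-rate $\sum_{w\in C}(d_w-1)$ minus the internal-edge count and the down-rate $\sum_{w\in C}(d_w-2)$ plus the number of ``tops,'' and cancel them via $E_C+t_C=|C|$, which is exactly the paper's cancellation of $\sum_{w\in W}|\{u\in\mathfrak o(w)\colon u\in W\}|$ against $\sum_{w\in W}\mathbf{1}(\hat w\in W)$ written in slightly different bookkeeping. Your handling of the root by adjoining $\mathfrak{a}$ as its parent is also precisely what the paper's definition of $\hat w$ (the unique vertex with a down-going edge to $w$, namely $\mathfrak{a}$ when $w=\rho$) does implicitly, and you are right that this is the one place where the degree convention at $\rho$ must be checked for the balance to close exactly.
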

\begin{proof}
Fix a vertex $v \in V$ for which we will consider the cluster $\zeta_t = \zeta_t^v$. Let $r_t^+ = r_t^+(v)$  and $r_t^- = r_t^-(v)$ denote the rate at which $|\zeta_t| \rightarrow |\zeta_t| \pm 1$, respectively. Note that the transition rules prohibit $\mathfrak{a} \in \zeta_t$. Moreover, $\zeta_t$ is unchanged if we assume that initially, all vertices but $v$ belong to the cluster of $\mathfrak{a}$. Therefore, it is enough to check that for any finite $W$, if $\zeta_t = W$ and $\zeta_t^\mathfrak{a} = V\cup\{a\}\setminus W$ then $r_t^+ - r_t^-=0$. For a vertex $w \neq \mathfrak{a}$ let $\hat w$ denote its unique parent vertex, i.e., the unique vertex  such that there is a down-going edge to $w$, and let $\mathfrak o(w)$ denote the set of childs vertices. From the transition rules it follows that
$$r_t^+ = \sum_{w \in W}(d_w-1) - |\{u\in \mathfrak o(w)\colon u \in W\}|$$
and
$$r_t^- = \sum_{w \in W}(d_w-2) + \mathbf{1}(\hat w \notin W) = \sum_{w \in W}(d_w-1) - \mathbf{1}(\hat w \in W)$$
and since $\sum_{w \in W}|\{u \in \mathfrak o(w):u \in W\}|$ and $\sum_{w \in W}\mathbf{1}( \hat w \in W)$ are both equal to $|\{(w,u)\colon u,w \in W\}|$,
$$r_t^+ - r_t^- = -\sum_{w \in W}|\{u \in \mathfrak o(w)\colon u \in W\}|+\sum_{w \in W}\mathbf{1}(\hat w \in W) = 0.$$
\end{proof}

\begin{proof}[Proof of \thref{cor:trees}]
With \thref{lem:btm} we can bound the transition rate of $|\zeta_t^\rho|$ by $\sum_{v \in \zeta_t^\rho} d_v.$ To prove the part of \thref{cor:trees} concerning bounded degree trees we can follow the same approach as \thref{thm:bgw} (i); again we use the transition rate bound $D\cdot |\zeta_t^\rho|$. Similarly, we can use the same technique as the proof of \thref{thm:bgw} (ii) to deduce site recurrence for Galton-Watson trees.	
\end{proof}

\bibliographystyle{amsalpha}
\bibliography{CRW}

\end{document}